\newtheorem{pro}{Proposition}[section]
\newtheorem{thm}[pro]{Theorem}
\newtheorem{lem}[pro]{Lemma}
\theoremstyle{definition}
\newtheorem{dfn}[pro]{Definition}
\newcommand{\VV}{\mathcal V}
\newcommand{\WW}{\mathcal W}
\newcommand{\bdy}{\partial}
\newcommand{\EE}{\mathcal E}
\newcommand{\TT}{\mathcal T}
\newcommand{\MV}{\mathbb V}
\newcommand{\MW}{\mathbb W}
\newcommand{\plex}[1]{\ensuremath{[{#1}]}}
\title{Normalizing Topologically Minimal Surfaces II: Disks}
\date{\today}
\author{David Bachman}
\begin{document}

\begin{abstract}
We show that a topologically minimal disk in a tetrahedron with index $n$ is either a normal triangle, a normal quadrilateral, or a normal helicoid with boundary length $4(n+1)$. This mirrors geometric results of Colding and Minicozzi. 
\end{abstract}
\maketitle

\section{Introduction}

\markright{NORMALIZING TOPOLOGICALLY MINIMAL SURFACES II}

A surface is said to be {\it geometrically minimal} if it represents a critical point for the area functional. In \cite{cm1}, \cite{cm2}, \cite{cm3}, \cite{cm4} Colding and Minicozzi showed that if the intersection of a geometrically minimal surface with a Euclidean 3-ball is simply connected, then it must be either the graph of a function or a helicoid, as in Figure \ref{f:helicoid}(a). Furthermore, geometrically minimal surfaces have a well-defined {\it index}  (defined as the Morse index of the critical point that they represent) and Colding and Minicozzi showed that the number of turns of such helicoids are directly proportional to their index. 

In earlier work, the author introduced the idea of a {\it topologically minimal surface}, and conjectured that such surfaces have many of the same properties as geometrically minimal surfaces \cite{TopIndexI}. This lead to the conjecture that such surfaces could be always be isotoped so that they satisfy a similar local classification as that given by Colding and Minicozzi. 

\begin{figure}
\[\begin{array}{cc}
\includegraphics[width=2in]{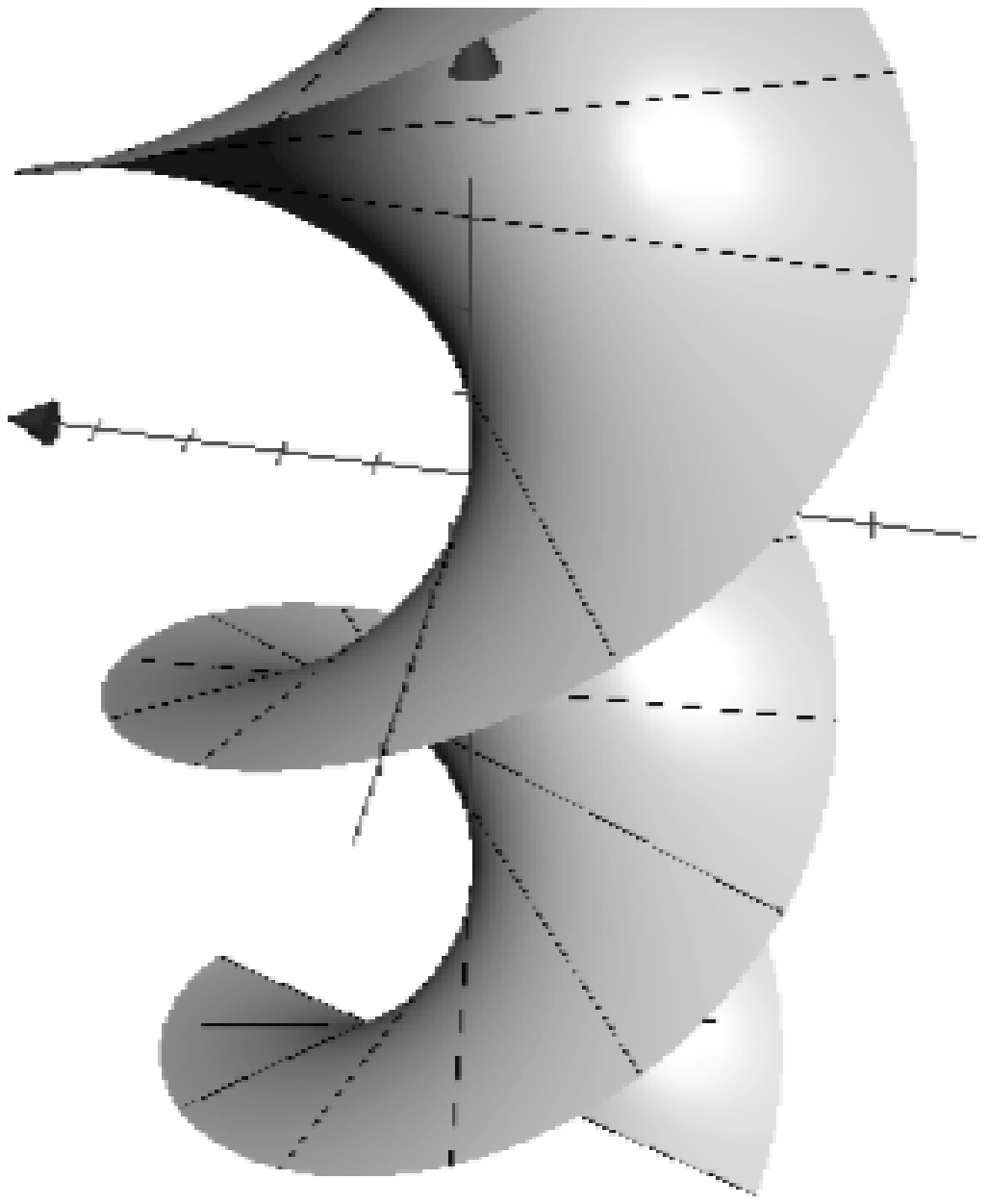} & \includegraphics[width=2in]{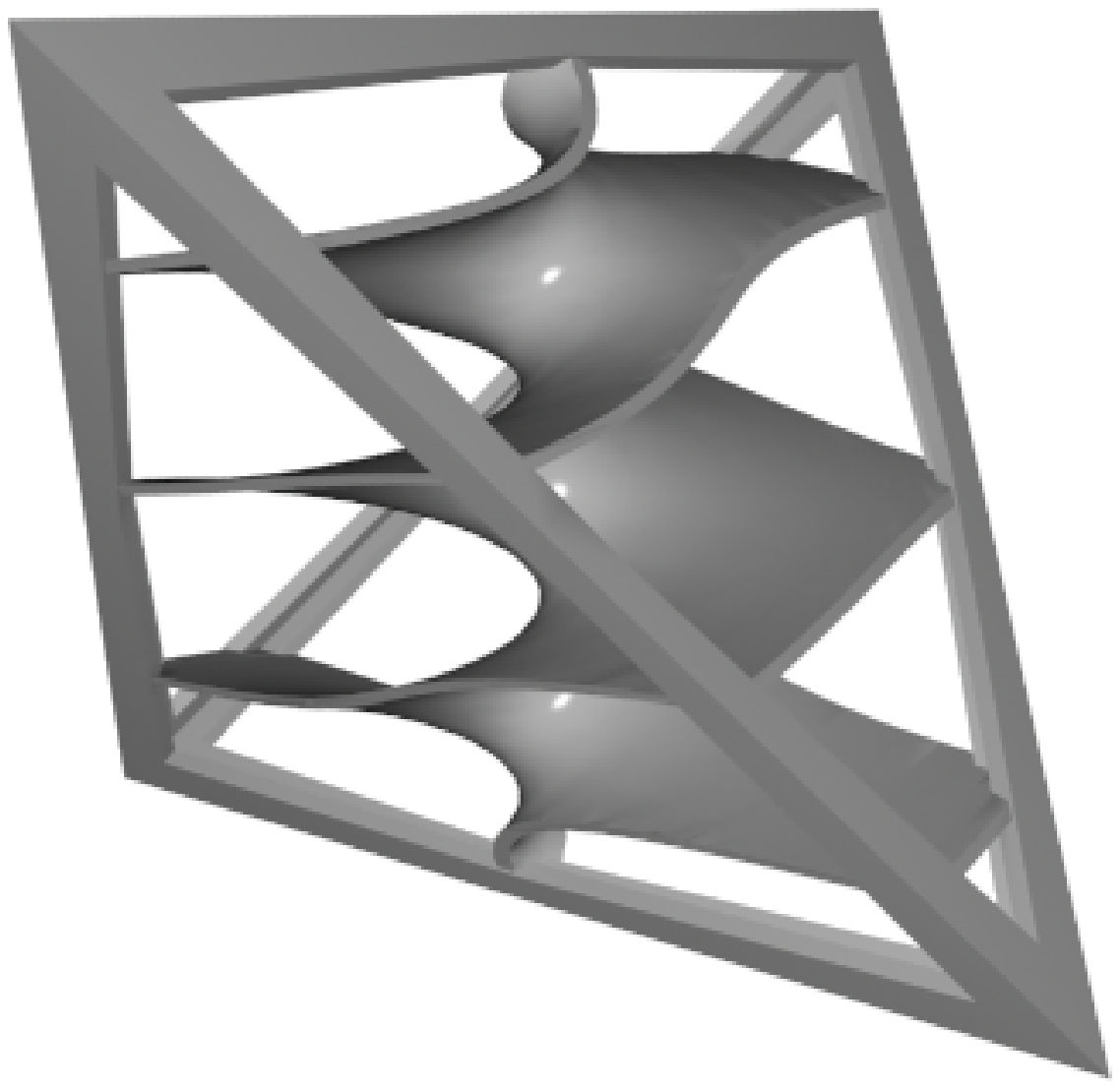}\\
\mbox{(a)} & \mbox{(b)}
\end{array}\]
\caption{(a) A helicoid, the graph of $z=\theta$ in cylindrical coordinates. (b) The intersection of a helicoid with a tetrahedron.}
\label{f:helicoid}
\end{figure}

In the first of the present series of papers \cite{TopMinNormalI}, the author introduced a separate notion of what it means for a surface {\it in a tetrahedron} to be topologically minimal, along with an associated notion of ``index". (See Section \ref{s:TopMinDefinitions} for definitions.) The main result of that paper is that if an embedded surface $H$ in a triangulated 3-manifold is topologically minimal with index $n$, then it could be isotoped so that its intersection with each tetrahedron is topologically minimal and the sum of the indices over all tetrahedra is at most $n$. 

In the present paper we begin the classification of all possible topologically minimal surfaces in tetrahedra by examining the simply connected case. In particular, our main result is the following:

\begin{thm}
\label{t:main}
Suppose $H$ is a topologically minimal disk in a tetrahedron. Then either the index of $H$ is
	\begin{enumerate}
		\item 0, and $H$ is a triangle or quadrilateral, or
		\item $n \ge 1$, and $H$ is a normal helicoid with $|\bdy H|=4(n+1)$.
	\end{enumerate}
\end{thm}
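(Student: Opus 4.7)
The plan is to analyze $H$ combinatorially via its boundary $\partial H \subset \partial \Delta \cong S^2$, where $\Delta$ denotes the ambient tetrahedron. After an isotopy eliminating any arcs of $\partial H$ with both endpoints on a single edge of $\Delta$ (which are trivially compressible via a boundary isotopy), the curve $\partial H$ decomposes each of the four faces of $\Delta$ into a disjoint union of arcs, each connecting two distinct edges. Since $H$ sits in a 3-ball and is a disk, it is determined up to isotopy by $\partial H$, so the problem reduces to classifying the admissible arc patterns on $\partial \Delta$.

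For the index-zero case I would run an outermost-arc argument. If some face of $\Delta$ contains two or more arcs of $\partial H$, an outermost such arc $\alpha$ cobounds with a subarc $\beta$ of the $1$-skeleton a subdisk $D$ of that face. Pushing $H$ across $D$ realises an edge-compression that yields a topologically simpler disk (or pair of disks); by the definition of topological minimality, an index-zero $H$ cannot admit an essential compression, so such configurations are forbidden. After ruling them out, each face contains at most one arc of $\partial H$, and a straightforward Euler-characteristic count on $S^2$ forces $|\partial H| \in \{3,4\}$, giving a normal triangle or quadrilateral.

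For $n \geq 1$ my strategy is to show that the topology of the disk-complex witnessing $\mathrm{index}(H) = n$ forces a layered helicoidal structure. Concretely, I would produce inside $H$ exactly $n+1$ parallel ``sheets,'' each a normal quadrilateral, joined through the interior of $\Delta$ by twisting bands whose compressions span an $n$-sphere in the disk complex; a direct count then gives $|\partial H| = 4(n+1)$. For the converse I would argue that any topologically minimal disk of index $n$ must have $\partial H$ crossing each edge of $\Delta$ in the helicoidal pattern, because any other arc configuration either admits a pair of disjoint edge-compressions (contradicting the sphericity required of the disk complex at index $n$) or admits an outermost-arc simplification as in Step 2.

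The main obstacle will be this converse: ruling out ``exotic'' arc patterns on $\partial \Delta$ that formally carry a disk complex of the correct homotopy type but that do not assemble into a genuine helicoid. I expect this to require a careful combinatorial analysis of arcs crossing each edge, together with rigidity of embedded disks in the $3$-ball with prescribed boundary, to show that the helicoidal pattern is the unique realisation compatible with both the boundary count and the index.
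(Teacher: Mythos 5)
There are genuine gaps, and the largest one is the part you yourself flag as ``the main obstacle'': for $n\ge 1$ your proposal is a plan rather than a proof, and the plan does not contain the mechanism the argument actually needs. The paper's key move is to view $H$ as a round disk and observe that the edge-compressing disks on the two sides of $H$ intersect $H$ in two families $\MV$ and $\MW$ of \emph{parallel} chords (parallel because the corresponding subarcs of $\TT^1$ are parallel in $\bdy\Delta-\bdy H$). The whole complex $\plex{\EE(H)}$ is then determined by which chords of $\MV$ cross which chords of $\MW$, i.e.\ by the number of chords and the relative rotation (``offset'') of the two families. A short case analysis shows offsets $0$, $1$, $3$ are incompatible with normal boundary, offset $\ge 4$ forces $\plex{\EE(H)}$ to deformation retract to a cone (hence contractible, so $H$ is not minimal), and offset $2$ is exactly the helicoid, for which the complex retracts onto a join of $n$ copies of $S^0$, i.e.\ $S^{n-1}$, giving index exactly $n$. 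Your proposal contains neither the two-chord-family picture, nor the offset invariant, nor any computation showing the helicoid's complex has $\pi_{n-1}$ as its first nontrivial homotopy group; the phrase ``sphericity required of the disk complex at index $n$'' is not the definition of index (index $n$ only says $\pi_{n-1}$ is the first nontrivial homotopy group), so you cannot rule out exotic patterns by asserting the complex must be a sphere.

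There is also a problem in your index-$0$ step. Once the arcs of $\bdy H$ are normal, an outermost arc in a face cuts off a subdisk of that face whose intersection with $\TT^1$ consists of subarcs of \emph{two} distinct edges together with a vertex; that is not an edge-compressing disk (which requires $\bdy E=\alpha\cup\beta$ with $\beta$ a subarc of a \emph{single} edge), so ``two arcs in a face'' does not directly produce a compression. The correct argument finds an edge of $\TT^1$ met twice by $\bdy H$, joins two adjacent intersection points by an arc in $H$ through the interior of $\Delta$, and uses the fact that $H$ is a disk in a ball to cap this off to an edge-compressing disk; emptiness of $\EE(H)$ then forces $\bdy H$ to meet each edge at most once, hence to have length $3$ or $4$. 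Relatedly, both here and in your preprocessing step you speak of isotoping or ``pushing $H$ across'' disks: the theorem classifies the given disk $H$, and the definition of topological minimality is a condition on the complex $\plex{\EE(H)}$ (empty or non-contractible), not on the existence of an ``essential'' compression, so modifying $H$ is not available and not needed --- a non-normal arc instead yields an edge-compressing disk in $\bdy\Delta$ that is disjoint from all others, making $\plex{\EE(H)}$ a cone and hence contractible.
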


A {\it normal helicoid} is the intersection of the graph of $z=\theta$ (in cylindrical coordinates) with a tetrahedron, where the $z$-axis meets the midpoints of opposite edges, as in Figure \ref{f:helicoid}(b). Note that Theorem \ref{t:main}, combined with the aforementioned results from the first paper in this series \cite{TopMinNormalI},  precisely mirrors the Colding-Minicozzi result described above.

\section{Topologically Minimal Surfaces in Tetrahedra}
\label{s:TopMinDefinitions}

Throughout this paper $H$ will represent a properly embedded disk in a tetrahedron $\Delta$. The 1- and 2-skeleton of $\Delta$ will be denoted $\TT^1$ and $\TT^2$, respectively. 

\begin{dfn}
An edge-compressing disk for $H$ is a disk $E$ such that $\bdy E=\alpha \cup \beta$, where $E \cap H=\alpha$ and $E \cap \TT^1 =\beta$ is a subarc of an edge of $\TT^1$. We will use the notation $\EE(H)$ to refer to the set of edge-compressing disks for $H$. 
\end{dfn}

We define an equivalence relation on edge-compressing disks as follows:

\begin{dfn}
Edge-compressing disks $E,E' \in \EE(H)$ are {\it equivalent}, $E \sim E'$, if $E$ and $E'$ are isotopic through disks in $\EE(H)$. 
\end{dfn}

\begin{dfn}
Two edge-compressing disks $E$ and $E'$ are said to be {\it $\TT^1$-disjoint} if they are disjoint away from a neighborhood of $\TT^1$. 
\end{dfn}

\begin{dfn}
\label{d:DiskComplex}
The {\it edge-compresssing disk complex} $\plex{\EE(H)}$ is the complex defined as follows: vertices correspond to equivalence classes $[E]$ of the disk set $\EE(H)$. A collection of $n$ vertices spans an $(n-1)$-simplex if there are representatives of each which are pairwise $\TT^1$-disjoint.
\end{dfn}

\begin{dfn}
\label{d:Indexn}
The disk $H$ is said to be {\it topologically minimal in the tetrahedron $\Delta$} if the complex $\plex{\EE(H)}$ is either empty or non-contractible. In the former case we define the {\it index} of $\plex{\EE(H)}$ to be 0. In the latter case, the {\it index} of $\plex{\EE(H)}$ is defined to be the smallest $n$ such that  $\pi_{n-1}(\plex{\EE(H)})$ is non-trivial. 
\end{dfn}

\section{Disks with normal boundary.}

\begin{dfn}
A properly embedded arc $\delta$ in a face of $\TT^2$ is {\it normal} if it connects distinct edges of $\TT^1$. If $\bdy H$ is made up of normal arcs, then we say its {\it length} is $|H \cap \TT^1|$. A {\it normal triangle} is a properly embedded disk in $\Delta$ whose boundary has length 3, and a {\it normal  quadrilateral} is a disk whose boundary has length 4. 
\end{dfn}

Normal triangles and quadrilaterals were first introduced by Kneser \cite{kneser:29}, and later used extensively by Haken \cite{haken:68} to solve many important problems in the theory of 3-manifolds. Later, Rubinstein introduced {\it normal octagons} as part of his theory of ``almost normal surfaces"  \cite{rubinstein:93}.  In the parlance of the present paper, a normal octagon is a normal helicoid with boundary length 8. Many of Rubinstein's ideas about almost normal surfaces came from thinking about topological analogues of geometrically minimal surfaces. In that sense, the present series of papers is a continuation of his program.

\begin{lem}
If $H$ is a topologically minimal disk in $\Delta$, then $\bdy H$ is made up of normal arcs.
\end{lem}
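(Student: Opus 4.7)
The plan is a contradiction argument. Assume $H$ is topologically minimal but $\bdy H$ contains a non-normal arc; I will produce an edge-compressing disk $D$ whose equivalence class is a cone vertex of $\plex{\EE(H)}$, contradicting the non-contractibility required by Definition \ref{d:Indexn}.

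First I produce $D$. After a small isotopy, $\bdy H$ is transverse to $\TT^1$, so any non-normal arc lies in some face $F$ with both endpoints on a common edge $e$, and cobounds a disk in $F$ with a sub-arc of $e$. Among all such arcs pick $\alpha$ whose cobounding disk $D \subset F$ is innermost, meaning the interior of $D$ is disjoint from $\bdy H$. Then $D \cap H = \alpha$ and $D \cap \TT^1 = \beta \subset e$, so $D \in \EE(H)$.

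Next I show $[D]$ is a cone vertex. Given any simplex $\sigma = \{[E_1], \ldots, [E_k]\}$ of $\plex{\EE(H)}$, I claim there exist representatives $D', E_1', \ldots, E_k'$ that are pairwise $\TT^1$-disjoint. Fix $\TT^1$-disjoint representatives $E_1, \ldots, E_k$ in general position with $F$. For each $i$, reduce $|E_i \cap F|$ by standard exchanges: a closed component of $E_i \cap F$ bounds an innermost disk in $E_i$ and also a disk in $F$, and may be surgered away by an isotopy supported in $\mathrm{int}(\Delta)$; an arc of $E_i \cap F$ with both endpoints on $H \cap F$ admits an outermost sub-disk in $E_i$, yielding an isotopy of $E_i$ across $F$ that strictly reduces the arc count; the remaining arcs have at least one endpoint on $\TT^1 \cap F \subset \bdy F$ and can be pushed off $F$ into the $\TT^1$-neighborhood permitted by the disjointness relation. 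Since these moves occur away from the existing intersections $E_i \cap E_j$ (which already sit in neighborhoods of $\TT^1$), pairwise $\TT^1$-disjointness is preserved. After iteration, each $E_i' \cap F$ lies in a neighborhood of $\TT^1$; since $D \subset F$, the $E_i'$ are $\TT^1$-disjoint from $D' = D$.

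Thus $[D]$ is a cone vertex of the non-empty complex $\plex{\EE(H)}$, so that complex is contractible---contradicting topological minimality. The main obstacle is the middle step: verifying that each outermost-arc isotopy of $E_i$ lands back in $\EE(H)$, i.e., that the modified disk still satisfies $E_i \cap H = $ a single boundary arc. This requires inducting on $|E_i \cap F|$ and using both the innermost choice of $\alpha$ and the structure of $\bdy E_i \cap F$ to rule out the problematic case where the outermost sub-disk of $E_i$ meets $H$ in its interior.
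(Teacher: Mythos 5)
Your first step---extracting an edge-compressing disk $D$ from an outermost (in your terminology, innermost) non-normal arc $\alpha$ in a face $F$, so that $D\cap H=\alpha$ and $D\cap\TT^1=\beta\subset e$---is exactly the paper's move, and it is correct. The problem is the second step. You set out to show $[D]$ is a cone point by isotoping the representatives $E_1,\dots,E_k$ of an arbitrary simplex off of $F$ via an innermost-curve/outermost-arc surgery procedure, and you yourself concede at the end that the crucial case (an outermost sub-disk of $E_i$ meeting $H$ in its interior, so that the modified disk may fail to lie in $\EE(H)$, or may leave the class $[E_i]$) is unresolved. As written this is a genuine gap: the argument that the complex is a cone is not completed. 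There is also an unaddressed secondary issue---an isotopy of $E_i$ across a sub-disk of $F$ can drag $E_i$ through a region occupied by some $E_j$, so the assertion that pairwise $\TT^1$-disjointness is preserved needs justification that is not supplied.

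The missing idea, which is what the paper uses and which makes the entire surgery procedure unnecessary, is that no isotopy of the $E_i$ is needed at all. Put each $E_i$ in standard position as a properly embedded disk in $\Delta$, so that $E_i\cap\bdy\Delta=\beta_i\subset\TT^1$ (equivalently, observe that two edge-compressing disks can always be arranged to intersect only along their intersections with $H$, and that $E_i\cap H$ is an arc whose interior lies in the interior of $H$ while $D\cap H=\alpha$ lies in $\bdy H$). Since $D\subset F\subset\bdy\Delta$, one gets $E_i\cap D\subset E_i\cap\bdy\Delta\subset\TT^1$ for every $i$ simultaneously, so $D$ is $\TT^1$-disjoint from any pairwise $\TT^1$-disjoint family of representatives without moving anything. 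Hence every simplex of $\plex{\EE(H)}$ extends over $[D]$, the complex is a cone on $[D]$ and therefore contractible, and $H$ is not topologically minimal. Replacing your middle step with this observation closes the gap and recovers the paper's one-line argument.
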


\begin{proof}
If $\bdy H$ meets some face $\sigma$ of $\Delta$ in a non-normal arc, then an outermost such arc on $\sigma$ cuts off an edge-compressing disk $E$ for $H$. As $E \subset \bdy \Delta$, it is disjoint from all other edge-compressing disks. Hence, the vertex $[E]$ of $\plex{\EE(H)}$ is connected to every other vertex by an edge. It follows that $\plex{\EE(H)}$ is a cone on $[E]$, and is thus contractible. We conclude $H$ was not topologically minimal in $\Delta$.
\end{proof}

The combinatorics of normal loops, i.e. loops made up of normal arcs, in the boundary of a tetrahedron are well understood. Picturing a normal loop $c$ as the equator of the sphere $\bdy \Delta$, the following lemma and figure articulate the possibilities for each hemisphere of $\partial \Delta$ cut along $c$:

\begin{lem}
\label{l:tetCurve}
Let $c \subset \Delta$ be a normal loop pictured as the equator of $\bdy \Delta$ and  let $S_1$ and $S_2$ denote the hemispheres of $\Delta - c$. Then exactly one of the following holds:
	\begin{enumerate}
		\item $c$ has length 3 and one of $S_1$ or $S_2$ meets $\bdy \Delta$ as in Figure \ref{f:tetCurve}(a).   In particular $c$ is the link of a vertex of $\Delta$.
		\item $c$ has length 4 and both $S_1$ and $S_2$ meet $\bdy \Delta$ as in Figure \ref{f:tetCurve}(b).  In particular, $c$ separates a pair of edges.
		\item $c$ has length $4k, k \geq 2$, and both $S_1$ and  $S_2$ meet $\bdy \Delta$ as in Figure \ref{f:tetCurve}(c).  In particular, $S_i$ contains 2 vertices, three sub-edges meeting each vertex, and $2k-3$ parallel sub-edges separating the 2 vertices.
	\end{enumerate}
\end{lem}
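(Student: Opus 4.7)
The plan is to case-analyze on the distribution of the four tetrahedron vertices between the two hemispheres $S_1,S_2$ of $\bdy\Delta\setminus c$. Let $k_i=|S_i\cap\TT^0|$, so $k_1+k_2=4$; up to relabeling the three cases are $(k_1,k_2)\in\{(0,4),(1,3),(2,2)\}$. The main bookkeeping device is, for each vertex $v\in\TT^0$ and each face $f$ of $\Delta$ containing $v$, the count $x_v^f\geq 0$ of arcs of $c\cap f$ cutting off the corner at $v$; such arcs are parallel in $f$. Then $|c\cap\TT^1|=\sum_{f,\,v\in f}x_v^f$, and on each edge $e=vu$ shared by faces $f_1,f_2$ the matching condition
\[ |c\cap e|\;=\;x_v^{f_1}+x_u^{f_1}\;=\;x_v^{f_2}+x_u^{f_2} \]
must hold.

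First I would rule out $(0,4)$ by an Euler-characteristic count on the disk $S_1$: a cell decomposition of $S_1$ by $c\cup(\TT^1\cap S_1)$ has $\chi(S_1)=1$, which is inconsistent with $S_1$ containing no tetrahedron vertex while $c$ is a nonempty union of normal arcs (every normal arc cuts off some corner, and consistency of sides forces at least one vertex into each hemisphere). In case $(1,3)$, let $v$ be the lone vertex of $S_1$. Connectivity of $c$ together with the fact that $S_1$ contains no other vertex forces $x_v^f=1$ for each of the three faces $f$ at $v$ and $x_u^f=0$ otherwise, so $c$ is the link of $v$, of length $3$; this is case~(1).

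The main case is $(2,2)$: say $v_1,v_2\in S_1$ and $w_1,w_2\in S_2$. The two ``axial'' edges $v_1v_2$ and $w_1w_2$ are crossed by $c$ an even number of times and the four ``diagonal'' edges $v_iw_j$ an odd number of times; moreover the matching condition in each face reduces to a triangle inequality on the three edge-crossing counts in that face. If neither axial edge is crossed, these conditions immediately force each diagonal edge to be crossed exactly once, giving a normal quadrilateral of length $4$ (case~(2)). Otherwise, using the six matching conditions together with the connectedness of $c$, I would show the crossing numbers fall into a one-parameter family indexed by $k\geq 2$ of total length $4k$, and verify that $S_i$ decomposes as in Figure~\ref{f:tetCurve}(c): three sub-edges meeting each of $v_1,v_2$ together with $2k-3$ parallel separating sub-edges coming from the interior pieces of $v_1v_2\cap S_i$ and $w_1w_2\cap S_i$.

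The principal obstacle is the rigidity assertion in the $(2,2)$ case: once some axial edge is crossed, the only connected normal-loop configurations are the helicoidal ones of length $4k$ with $k\geq 2$. My plan is to analyze the twelve variables $x_v^f$ subject to the six linear matching conditions, observe that the solution cone (once the vertex distribution is fixed) has very few extremal rays, and eliminate the configurations that fail to produce a single connected loop---in particular ruling out combinations that would correspond to a disjoint union of a vertex link and a quadrilateral, or of two quadrilaterals. A direct sub-edge count along each of the six edges will then confirm the figure of $2k+3$ sub-edges in $S_i$.
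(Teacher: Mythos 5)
The paper offers no proof of this lemma at all: it is dispatched with the remark that it is ``a standard result in normal surface theory'' and a citation to Thompson, so there is nothing to compare your argument against line by line. Your outline is the standard combinatorial route one would take: distribute the four vertices of $\Delta$ between the hemispheres, encode the curve by the corner counts $x_v^f$ subject to the edge-matching conditions, and use connectivity of $c$ to cut the solution cone down to the vertex link, the quadrilateral, and the one-parameter family of length $4k$. The cases $(0,4)$ and $(1,3)$ go through as you describe (for $(0,4)$, the Euler count works because every vertex-free region of $S_1\cap f$ is a $2m$-gon with $m\ge 2$ normal arcs, which forces $L\ge 2F_1=2E_1+2=L+2$).

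Two caveats. First, a small but real slip: in the $(2,2)$ case with both axial edges uncrossed, the matching conditions alone do \emph{not} force each diagonal edge to be crossed exactly once --- they force all four diagonal weights to be equal and odd, which describes $d$ parallel copies of the quadrilateral; you need connectedness of $c$ to conclude $d=1$. Second, and more importantly, the part you label the ``principal obstacle'' is in fact the entire content of the lemma, and your proposal only promises to resolve it (``I would show the crossing numbers fall into a one-parameter family\dots''). The solution cone of the six matching conditions is large and contains many disconnected multicurves (e.g.\ weights $(2,4;3,1,1,3)$ of total length $14$ satisfy all the inequalities), so the reduction to axial weights $(2k-2,2k-2)$ with all diagonal weights $1$ genuinely requires the connectivity analysis to be carried out, not just announced. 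As written this is a correct plan rather than a proof; since the paper treats the statement as a known exercise, fleshing out that final step (or simply citing \cite{thompson:94} as the paper does) would be an acceptable resolution.
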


The above lemma is a standard result in normal surface theory, and is a straightforward exercise in considering the possibilities for $c$ on the boundary of a tetrahedron (see e.g.~\cite{thompson:94}).  

	\begin{figure}[h]
               \psfrag{3}{$3$}
               \psfrag{4}{$4$}
               \psfrag{k}{$4k, k \geq 2$}
               \psfrag{c}{$c$}
               \psfrag{a}{(a)}
               \psfrag{b}{(b)}
               \psfrag{d}{(c)}
               \psfrag{n}{\tiny{($2k-3$ sub-edges)}}
               \begin{center}
                       \includegraphics[width=4 in]{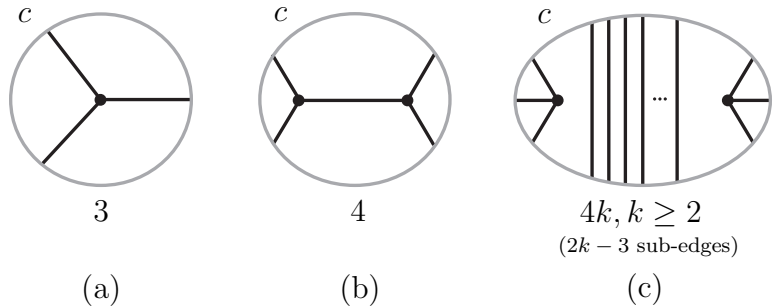}
                       \caption{Hemispheres of a tetrahedron  bounded by normal loops of lengths 3, 4, and $4k, k\geq 2$ }
                       \label{f:tetCurve}
               \end{center}
       \end{figure}

\begin{lem}
\label{l:NormalTriangle}
If $\plex{\EE(H)}=\emptyset$ then $H$ is a normal triangle or quadrilateral.
\end{lem}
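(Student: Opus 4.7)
The plan is to establish that $\bdy H$ must be a normal loop of length $3$ or $4$; the conclusion will then follow immediately from the definitions of normal triangle and normal quadrilateral. First I would rerun the outermost-arc argument of the preceding lemma: a non-normal boundary arc on any face of $\Delta$ produces, via an outermost such arc, an edge-compressing disk lying on $\bdy \Delta$, contradicting $\plex{\EE(H)} = \emptyset$. Hence $\bdy H$ is a normal loop, and by Lemma \ref{l:tetCurve} its length is $3$, $4$, or $4k$ with $k \geq 2$. In the first two cases, $H$ is by definition a normal triangle or normal quadrilateral, so the entire task is to rule out the third case.

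Suppose for contradiction $|\bdy H| = 4k$ with $k \geq 2$. By Lemma \ref{l:tetCurve}(3), each hemisphere of $\bdy \Delta - \bdy H$ contains at least $2k - 3 \geq 1$ parallel sub-edges, i.e., sub-arcs of edges of $\TT^1$ cut off by $\bdy H$ whose interiors avoid the vertices of $\Delta$. Fix one such sub-edge $\beta$, lying on an edge $e$ of $\TT^1$ with endpoints $p, q \in \bdy H$. Since $H$ is a properly embedded disk in the $3$-ball $\Delta$, it separates $\Delta$ into two $3$-balls $\overline{\Delta_+}$ and $\overline{\Delta_-}$; as the interior of $\beta$ is disjoint from $H$, $\beta$ lies on the boundary of one of these balls, say $\overline{\Delta_+}$. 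Choose any embedded arc $\alpha \subset H$ from $p$ to $q$ with $\alpha \cap \bdy H = \{p, q\}$. Then $\alpha \cup \beta$ is a simple closed curve on the sphere $\bdy \overline{\Delta_+}$, and since $\overline{\Delta_+}$ is a $3$-ball this curve bounds a properly embedded disk $E \subset \overline{\Delta_+}$.

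I would then verify that $E$ is an edge-compressing disk, which gives the desired contradiction. Since $E$ is properly embedded in $\overline{\Delta_+}$, its interior lies in the manifold interior of $\overline{\Delta_+}$, which is disjoint from $H$ and from $\bdy \Delta \supset \TT^1$. Combined with $\bdy E \cap H = \alpha$ and $\bdy E \cap \TT^1 = \beta$, this gives $E \cap H = \alpha$ and $E \cap \TT^1 = \beta$, so $E \in \EE(H)$, contradicting $\plex{\EE(H)} = \emptyset$. The main obstacle is precisely the production of $\beta$: what is needed is a sub-arc of $\TT^1$ cut off by $\bdy H$ whose interior avoids every vertex of $\Delta$, so that $E$ can be embedded close to $\beta$ without crossing $\TT^1$ in its interior. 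This is exactly what Lemma \ref{l:tetCurve}(3) furnishes when $k \geq 2$, and it is why the sub-edges present in the length-$3$ and length-$4$ configurations (which all meet vertices) fail to produce an edge-compressing disk by this method.
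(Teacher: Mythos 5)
Your proposal is correct and follows essentially the same route as the paper: the key step in both is that a subarc of an edge of $\TT^1$ cut off between two points of $\bdy H$, together with an arc on $H$ joining its endpoints, bounds a disk (because $H$ is a disk in a ball) that is by definition an edge-compressing disk. The paper phrases this more directly as ``each edge is met at most once, hence $|\bdy H|$ is $3$ or $4$,'' rather than routing through case (3) of Lemma \ref{l:tetCurve}, but the content is the same.
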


\begin{proof}
Suppose $\bdy H$ meets some edge $e$ of $\TT^1$ twice. Then an adjacent pair of intersection points $a$ and $b$ cobound a subarc $\beta$ of $e$. Furthermore, $a$ and $b$ can be connected by an arc $\alpha$ on $H$. Since $H$ is a disk in a ball, the loop $\alpha \cup \beta$ bounds a disk $E$, which by definition is an edge-compressing disk for $H$. We conclude that when $\plex{\EE(H)}=\emptyset$ the surface $H$ meets each edge at most once. The only possibilities this leaves for $\bdy H$ are a loop of length 3 or a loop of length 4 (see Figure \ref{f:tetCurve}).
\end{proof}


Assuming the index of a topologically minimal disk $H$ in $\Delta$ is at least 1, the previous lemmas tell us that $\bdy H$ has length $4k$ (for some $k \geq 2$), and divides $\bdy \Delta$ into two hemispheres of the form depicted in Figure \ref{f:tetCurve}(c). Note that, while there is only one normal triangle and quadrilateral up to symmetries of the tetrahedron $\Delta$, there can be many disks with longer boundaries. For example, two different disks with boundary of length  20 are depicted in Figure \ref{f:20-gons}. However, for each possible boundary length, there is only one disk that can be realized as a normal helicoid.  One consequence of Theorem \ref{t:main} is that the other disks are not topologically minimal. This dispels a belief held by many experts that all disks with normal boundary should be topologically minimal. 

\begin{figure}
\[\includegraphics[width=4in]{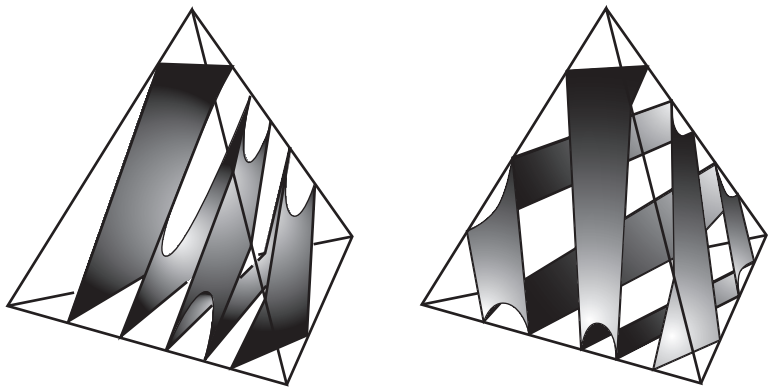}\]
\caption{Two different 20-gons in a tetrahedron $\Delta$. Only the one on the left can be realized as the intersection of a helicoid with $\Delta$.}
\label{f:20-gons}
\end{figure}

\section{Proof of Theorem \ref{t:main}}

Let $\VV$ and $\WW$ denote the two sides of $H$.  We now examine further the edge-compressing disks for $H$.  By definition, such a disk $E$ has boundary $\bdy E=\alpha \cup \beta$, where $\alpha \subset H$ and $\beta \subset \TT^1$. Since $H$ is a disk, the arc $\alpha$ is unique, up to isotopy fixing its endpoints. Picturing $H$ as a round, flat disk, the intersections of the edge-compressing disks in $\VV$ and $\WW$ with $H$ are isotopic to two sets of line segments, which we denote as $\MV$ and $\MW$, respectively. Furthermore, as the arcs of $\TT^1 \cap \VV$ are parallel on $\bdy \Delta -\bdy H$, it follows that the lines of $\MV$ are parallel. Similarly, the lines of $\MW$ must be parallel as well. See Figure \ref{f:12-gonCircle}.

\begin{figure}
\psfrag{1}{$1$}
\psfrag{2}{$2$}
\psfrag{3}{$3$}
\psfrag{4}{$4$}
\psfrag{5}{$5$}
\psfrag{6}{$6$}
\psfrag{7}{$7$}
\psfrag{8}{$8$}
\psfrag{9}{$9$}
\psfrag{a}{$10$}
\psfrag{b}{$11$}
\psfrag{c}{$12$}
\psfrag{x}{(a)}
\psfrag{y}{(b)}
\[\includegraphics[width=4.5in]{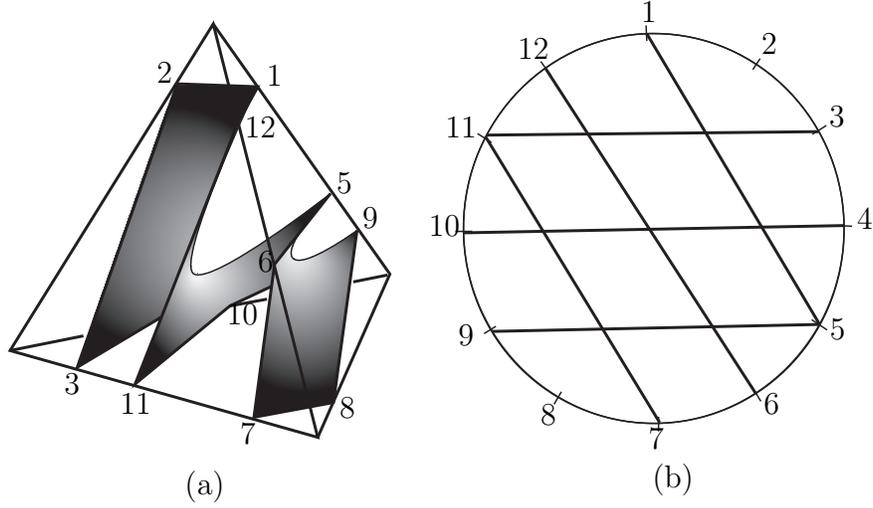}\]
\caption{(a) A 12-gon $H$ in $\Delta$, and (b) $H$ realized as a flat disk. The horizontal line segments $\MV$ in (b) are the intersections of the edge-compressing disks in $\VV$ with $H$, while the slanted line segments $\MW$ are from the edge-compressing disks in $\WW$.}
\label{f:12-gonCircle}
\end{figure}

Note that two edge-compressing disks intersect if and only if they intersect on $H$. Hence, the complex $\plex{\EE(H)}$ is completely determined by the lines in $\MV$ and $\MW$, and how these lines meet each other. That is, two edge compressing disks are $\TT^1$-disjoint, and hence represent endpoints of an edge of $\plex{\EE(H)}$, if and only if the corresponding line segments in $\MV \cup \MW$ meet in at most an endpoint.

Given that $H$ is topologically minimal in $\Delta$ with index $n$, our task is to determine 
	\begin{enumerate}
		\item how many line segments are in $\MV$ and $\MW$, and 
		\item how much one family is rotated with respect to the other. 
	\end{enumerate}

The next lemma will allow us to remove some of the disks in $\plex{\EE(H)}$ without changing it homotopy type. This, in turn, allows us to simplify the two families of line segments that we have just described. 

\begin{lem}
\label{l:RemoveVertex}
Suppose $V$ and $W$ are $\TT^1$-disjoint edge-compressing disks, such that every other disk that is $\TT^1$-disjoint from $V$ is also disjoint from $W$. Then deleting $[V]$ from $\plex{\EE(H)}$ produces a complex with the same homotopy type as $\plex{\EE(H)}$.
\end{lem}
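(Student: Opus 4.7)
The plan is to show that $[V]$ is a \emph{dominated vertex} in $\plex{\EE(H)}$ in the sense that its link is a cone with cone point $[W]$. This is enough, because then the open star of $[V]$ can be collapsed onto its link without changing homotopy type.

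First I would verify that $\{[V],[W]\}$ spans a $1$-simplex of $\plex{\EE(H)}$, which is immediate from the assumption that $V$ and $W$ are $\TT^1$-disjoint. Next, the central step is to check that for any simplex $\sigma$ of $\plex{\EE(H)}$ containing $[V]$, the set $\sigma \cup \{[W]\}$ is also a simplex. Write such a $\sigma$ as $\{[V],[E_1],\ldots,[E_k]\}$ and choose pairwise $\TT^1$-disjoint representatives $V, E_1,\ldots,E_k$ (with $V$ our fixed representative). Each $E_i$ is then $\TT^1$-disjoint from $V$, so by hypothesis each $E_i$ is actually disjoint from $W$, hence in particular $\TT^1$-disjoint from $W$. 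Combined with the fact that $V$ and $W$ are themselves $\TT^1$-disjoint, the collection $\{W,V,E_1,\ldots,E_k\}$ is pairwise $\TT^1$-disjoint, so $\sigma \cup \{[W]\}$ is a simplex as claimed. Applied to simplices $\tau$ in $\mathrm{link}([V])$, this shows $\mathrm{link}([V])$ is a simplicial cone with apex $[W]$, and in particular contractible.

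Once the link is known to be contractible, the deformation retraction is standard. Let $Y = \plex{\EE(H)} \setminus \{[V]\}$ denote the subcomplex on all simplices not containing $[V]$. Then $\plex{\EE(H)} = Y \cup \mathrm{star}([V])$ and $Y \cap \mathrm{star}([V]) = \mathrm{link}([V])$. Because $\mathrm{star}([V])$ is a cone from $[V]$ over the contractible subcomplex $\mathrm{link}([V])$, there is a strong deformation retraction of $\mathrm{star}([V])$ onto $\mathrm{link}([V])$ (first contract $\mathrm{link}([V])$ to $[W]$, then slide $[V]$ along the edge $[V][W]$ to $[W]$). This retraction is the identity on $\mathrm{link}([V]) \subset Y$, so it extends by the identity on $Y$ to a deformation retraction of $\plex{\EE(H)}$ onto $Y$, proving the lemma.

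The only delicate point I anticipate is the combinatorial step (the second part of the first paragraph), where one must be careful that ``disjoint from $W$'' in the hypothesis really gives pairwise $\TT^1$-disjoint \emph{representatives} of $[W],[V],[E_1],\ldots,[E_k]$ simultaneously; the key trick is that $V$ and $W$ are held fixed throughout and the $E_i$ are automatically disjoint from the chosen $W$, so no re-isotoping is needed. Everything after that is formal simplicial topology.
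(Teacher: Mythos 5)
Your proof is correct and follows essentially the same route as the paper: both arguments reduce to the observation that the hypothesis makes every simplex containing $[V]$ extend to one containing $[W]$ (i.e.\ $[V]$ is dominated by $[W]$), and then perform the standard collapse --- the paper phrases it as contracting the edge $[V][W]$, while you phrase it as deformation retracting $\mathrm{star}([V])$ onto its link, which is the same formal step. Your explicit attention to choosing representatives is a reasonable point of care, and the resolution you give (holding $V$ and $W$ fixed so the hypothesis applies directly to the $E_i$) matches the intended reading of the lemma.
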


\begin{proof}
The hypothesis of the lemma guarantees that for every simplex $\sigma$ of $\plex{\EE(H)}$ that has $[V]$ as a vertex, there is a simplex $\tau$ with $[W]$ as a vertex such that $\tau=[W] \cup \sigma$. Contracting the edge $e$ that joins $[V]$ and $[W]$ then collapses $\tau$ onto the simplex $\tau -[V]$. Hence, this contraction is a strong deformation retract onto the subcomplex of $\plex{\EE(H)}$ obtained by deleting $[V]$. 
\end{proof}

As noted above, if $H$ meets $\TT^1$ $4k$ times, then there are $2k-3$ lines in each family $\MV$ and $\MW$. Furthermore, there are three points of $H \cap \TT^1$ on each side of each line family. The middle point of each of these triples will be called a {\it central point} of that line family. We number the points of $H \cap \TT^1$, so that $1$ is a central point of $\MV$, and one of the central points of $\MW$ is numbered $p \le k$, as in Figure \ref{f:12-gonCircle}(b). We will call the number $p-1$ the {\it offset} of $H$. For example, the offset of the 12-gon $H$ depicted in Figure \ref{f:12-gonCircle}(a) is 2. Figure \ref{f:20-gonCircles} depicts the line families $\MV$ and $\MW$ for the two 20-gons of Figure \ref{f:20-gons}, showing that their offsets are 2 and 4. 

\begin{lem}
\label{l:helicoid}
If the offset of $H$ is 2 then $H$ is a normal helicoid.
\end{lem}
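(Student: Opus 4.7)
The plan is to show that the offset-$2$ condition uniquely determines the boundary loop $\bdy H$ in $\bdy\Delta$, up to symmetries of $\Delta$, and that this loop coincides with the boundary of the standard normal helicoid $H_0$ with $|\bdy H_0|=4k$. Since $\Delta$ is a 3-ball, properly embedded disks in $\Delta$ with the same boundary are ambient isotopic rel $\bdy\Delta$, so this identification of boundaries lifts to $H\simeq H_0$ in $\Delta$.

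The first step is to verify that $H_0$ itself has offset $2$. By Lemma \ref{l:tetCurve}(c), one hemisphere of $\bdy\Delta-\bdy H_0$ contains two vertices $v_1, v_2$ of $\Delta$ and the $2k-3$ parallel sub-edges of the edge $v_1v_2$; for $H_0$ these realize the family of edge-compressing disks on one side, while the parallel sub-edges of the opposite edge $v_3v_4$ realize the other side. Tracing $\bdy H_0$ on $\bdy\Delta$, one passes through three consecutive vertex sub-edges near $v_1$ (on $v_1v_3$, then $v_1v_2$, then $v_1v_4$) and then crosses into the $\WW$-hemisphere near $v_4$ (hitting $v_1v_4$, then $v_3v_4$, then $v_2v_4$), so that the central point of $\MW_0$ near $v_4$ appears exactly two boundary-steps after the central point of $\MV_0$ near $v_1$. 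Hence $\mbox{offset}(H_0)=2$.

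The second step is to observe that the offset, together with $|\MV|=|\MW|=2k-3$, determines the interlocking of the two vertex-triples around $\bdy H$: each triple corresponds to the three edges meeting a common vertex of $\Delta$, with its central point on the axial edge $v_1v_2$ (for $\MV$) or $v_3v_4$ (for $\MW$), and the offset prescribes how these two triples are cyclically aligned along $\bdy H$. Once the alignment is fixed, the parallel structure of the chord families forces the remaining boundary data, so the normal loop $\bdy H \subset \bdy\Delta$ is recovered up to a symmetry of $\Delta$; for offset $2$, this matches $\bdy H_0$ from the first step. After composing with a symmetry of $\Delta$ to align the boundaries, $H$ and $H_0$ become properly embedded disks in the 3-ball $\Delta$ with the same boundary and hence are ambient isotopic, so $H$ is a normal helicoid. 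The main obstacle I foresee is the bookkeeping in this second step, in particular making the translation between the flat-disk combinatorics of $(\MV,\MW)$ and the normal loop $\bdy H$ on $\bdy\Delta$ fully rigorous and verifying that offset $2$ really isolates a unique combinatorial loop type rather than a family of related types.
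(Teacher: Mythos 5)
Your overall strategy is the same as the paper's: pin down the normal loop $\bdy H \subset \bdy \Delta$, identify it with the boundary of the model helicoid, and conclude because properly embedded disks in the ball $\Delta$ with the same boundary are isotopic (a fact the paper also uses, implicitly, in its last sentence). The difference --- and the problem --- is in how the boundary gets pinned down. Your second step, where all the content lives, is asserted rather than proved: you claim that the offset together with $|\MV|=|\MW|=2k-3$ ``forces the remaining boundary data'' and recovers the loop up to a symmetry of $\Delta$, and you yourself flag the translation between the flat-disk combinatorics and the loop on $\bdy \Delta$ as an unresolved obstacle. As written, nothing in the proposal actually rules out the possibility that two inequivalent normal loops of length $4k$ both present with offset $2$; the two $20$-gons of Figure \ref{f:20-gons} show that distinct loops of the same length do occur, so this uniqueness genuinely needs an argument.

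The paper closes exactly this gap with a short structural argument you could adopt in place of the triple-alignment bookkeeping. When the offset is $2$, exactly two points $p, p'$ of $H \cap \TT^1$ meet no edge-compressing disk (in the labelling of Figure \ref{f:12-gonCircle}(b), points $2$ and $2k+2$). By the argument in the proof of Lemma \ref{l:NormalTriangle}, the edges of $\TT^1$ containing $p$ and $p'$ must each meet $\bdy H$ exactly once, and since a normal loop meets opposite edges of a tetrahedron in the same number of points, these two edges $e, e'$ are opposite. Then $\bdy \Delta - (e \cup e')$ is an annulus $A$ spanned by the remaining four edges, and normality of each arc of $\bdy H$ in each face forces the two arcs of $\bdy H - (p \cup p')$ to spiral monotonically around $A$. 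That is precisely the intersection pattern of a helicoid whose axis meets the midpoints of $e$ and $e'$, which completes the identification. With this in hand your first step (verifying that the model helicoid has offset $2$) is not needed, though it is a reasonable sanity check.
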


\begin{proof}
When the offset is 2 there are two points $p$ and $p'$ of $H \cap \TT^1$ that do not meet any edge-compressing disk. As in the proof of Lemma \ref{l:NormalTriangle}, it follows that these points lie on edges of $\TT^1$ that meet $\bdy H$ once. (See, for example, the 12-gon of Figure \ref{f:12-gonCircle} and the 20-gon of Figure \ref{f:20-gons} that has offset 2.) It is well known that a normal loop meets opposite edges of a tetrahedron in the same number of points, so the two edges $e$ and $e'$ that contain $p$ and $p'$ must be opposite. The surface $\bdy \Delta-(e \cup e')$ is thus an annulus $A$, with the other edges of $\TT^1$ connecting opposite ends of $A$. Since $\bdy H$ is made up of normal arcs, it follows that the two arcs $\bdy H-(p \cup p')$ must spiral monotonically around $A$. This is precisely the intersection pattern of a helicoid $\Sigma$ with the boundary of a tetrahedron $\Delta$, when opposite edges of $\TT^1$ meet the central axis of $\Sigma$. (See Figure \ref{f:helicoid}.) Such an intersection defines a normal helicoid in $\Delta$. 
\end{proof}

\begin{figure}
\psfrag{1}{\tiny $1$}
\psfrag{2}{\tiny $2$}
\psfrag{3}{\tiny $3$}
\psfrag{4}{\tiny $4$}
\psfrag{5}{\tiny $5$}
\psfrag{6}{\tiny $6$}
\psfrag{7}{\tiny $7$}
\psfrag{8}{\tiny $8$}
\psfrag{9}{\tiny $9$}
\psfrag{a}{\tiny $10$}
\psfrag{b}{\tiny $11$}
\psfrag{c}{\tiny $12$}
\psfrag{d}{\tiny $13$}
\psfrag{e}{\tiny $14$}
\psfrag{f}{\tiny $15$}
\psfrag{g}{\tiny $16$}
\psfrag{h}{\tiny $17$}
\psfrag{i}{\tiny $18$}
\psfrag{j}{\tiny $19$}
\psfrag{k}{\tiny $20$}
\psfrag{x}{(a)}
\psfrag{y}{(b)}
\[\includegraphics[width=4.5in]{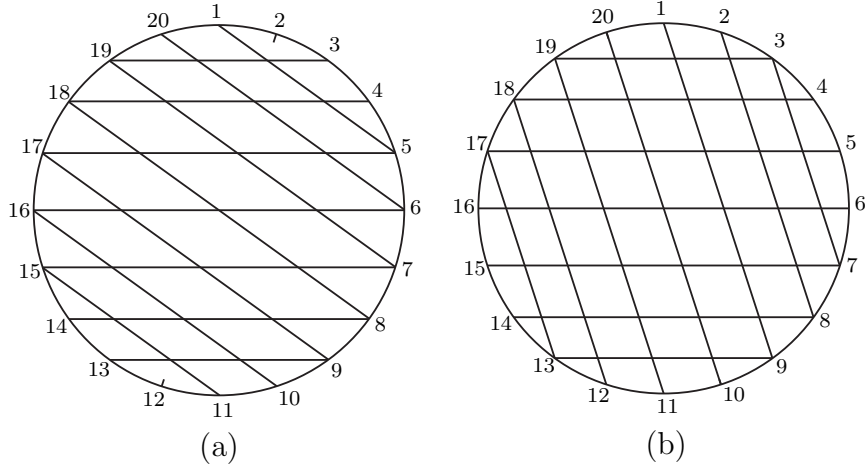}\]
\caption{The line families $\MV$ and $\MW$ associated with the 20-gons depicted in Figure \ref{f:20-gons}. In (a) the offset is 2, and in (b) the offset is 4.}
\label{f:20-gonCircles}
\end{figure}

\begin{lem}
There are no disks in $\Delta$ with offset 0 or 1.
\end{lem}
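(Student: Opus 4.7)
The plan is, for each of the two offsets, to identify the points of $H \cap \TT^1$ that are not endpoints of any edge-compressing disk, and derive a contradiction from the fact that $\TT^1$ has only six edges. The key preliminary observation, proved exactly as in Lemma \ref{l:NormalTriangle}, is that if $p\in H \cap \TT^1$ is not an endpoint of any edge-compressing disk, then the edge of $\TT^1$ containing $p$ meets $\bdy H$ exactly once; for otherwise two adjacent intersections of $\bdy H$ with that edge would cobound an edge-compressing disk having $p$ as an endpoint. Such a point is precisely a common non-endpoint of the two line families $\MV$ and $\MW$.

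When the offset is $0$, the central points of $\MV$ and $\MW$ coincide at positions $1$ and $2k+1$, so the six non-endpoints of $\MV$ are exactly the six non-endpoints of $\MW$. Each of these six points lies on an edge of $\TT^1$ meeting $\bdy H$ exactly once, and no such edge can contain two of them (since that would give it at least two crossings). Hence the six points determine six pairwise distinct edges of $\TT^1$, each crossed exactly once by $\bdy H$. But $\TT^1$ has exactly six edges, so this forces $|\bdy H|=6$, contradicting $|\bdy H|=4k\ge 8$.

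When the offset is $1$, the central points of $\MW$ are at positions $2$ and $2k+2$, so the common non-endpoints of $\MV$ and $\MW$ are the four points $\{1,2,2k+1,2k+2\}$. I will derive the contradiction from the adjacent pair at positions $1$ and $2$, using the hemisphere decomposition of Lemma \ref{l:tetCurve}(c). The three $\MV$ non-endpoints on one side of the family are precisely the three tripod legs of a single vertex of $\Delta$ on the $\VV$ side of $H$, with the central point corresponding to the middle leg; the analogous statement holds for $\MW$ on the $\WW$ side. Thus position $1$ is the middle leg of the tripod at some vertex $u$ on the $\VV$ side and a side leg of the tripod at some vertex $w$ on the $\WW$ side, while position $2$ is a side leg at $u$ and the middle leg at $w$. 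The edge of $\TT^1$ through position $1$ meets $\bdy H$ exactly once, so it has one sub-edge on each side of $H$, forcing its endpoints to be $u$ and $w$. Applying the same reasoning at position $2$ yields the same edge $uw$. Since the tetrahedron has a unique edge joining any two vertices, and that edge meets $\bdy H$ only once, positions $1$ and $2$ would have to coincide, a contradiction.

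The main subtlety I expect is in translating between the disk picture of Figure \ref{f:12-gonCircle}(b), in which the central points of the line families are defined, and the hemisphere picture of Lemma \ref{l:tetCurve}(c), in which the vertex tripods and their middle legs live. Once this correspondence is established, the identification of the common edge $uw$ is forced by pairing, along each single-crossing edge, the two tripod legs at its endpoints, and the contradictions follow by counting.
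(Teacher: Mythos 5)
Your proof is correct, and for offset $1$ it is in substance the paper's argument: the paper simply glues the two hemisphere pictures of Lemma \ref{l:tetCurve}(c) along $\bdy H$ according to the offset and observes that $\TT^1$ would then bound a bigon on $\bdy \Delta$ (Figure \ref{f:offset0and1}); your two edges through positions $1$ and $2$, each forced to join the same pair of vertices $u$ and $w$, are exactly that bigon, derived combinatorially from the observation that a point of $H\cap\TT^1$ missed by every edge-compressing disk lies on an edge meeting $\bdy H$ exactly once. Where you genuinely diverge is offset $0$: the paper uses the same bigon picture there, whereas you count --- the six common non-endpoints force all six edges of $\TT^1$ to meet $\bdy H$ exactly once, so $|\bdy H|=6$, contradicting $|\bdy H|=4k\ge 8$. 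That counting argument is clean and valid, but note that your offset-$1$ argument already applies verbatim to offset $0$ (positions $1$ and $2$ are common non-endpoints in that case as well), so a single bigon-style argument handles both cases, which is what the paper's one-line proof does. The translation you flag as the main subtlety --- matching the two triples of non-endpoints of each line family with the two vertex tripods of Figure \ref{f:tetCurve}(c), middle point to middle leg --- is exactly the content of the paper's figures and you handle it correctly; in fact your offset-$1$ argument does not even need the middle-leg versus side-leg distinction, only that positions $1$ and $2$ are both leg endpoints at $u$ on the $\VV$ side and both leg endpoints at $w$ on the $\WW$ side.
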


\begin{proof}
If $H$ has offset 0 or 1, then the two hemispheres of $\bdy \Delta$ meet as in Figure \ref{f:offset0and1}(a) or Figure \ref{f:offset0and1}(b). In either case, $\TT^1$ bounds a bi-gon on $\bdy \Delta$, a contradiction. 
\end{proof}

\begin{figure}
\[\begin{array}{cc}
\includegraphics[width=2 in]{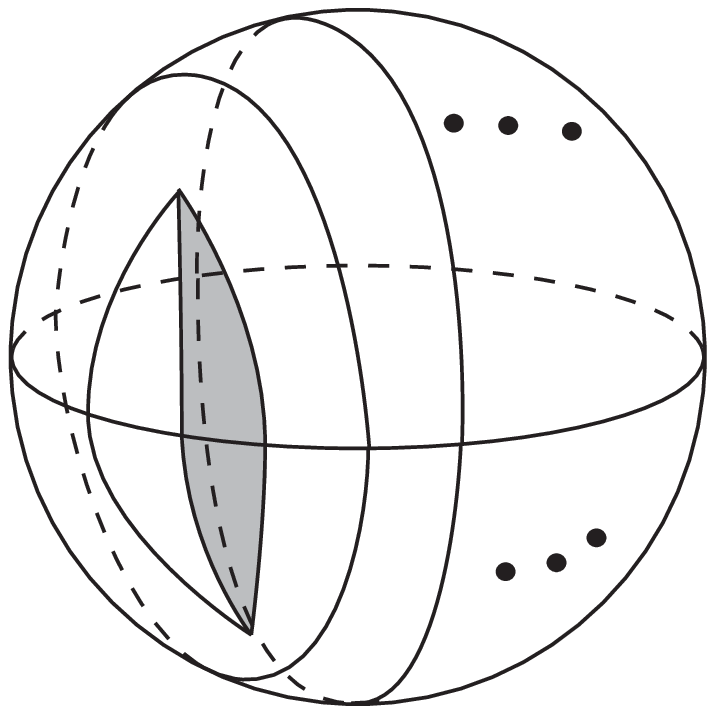} & \includegraphics[width=2 in]{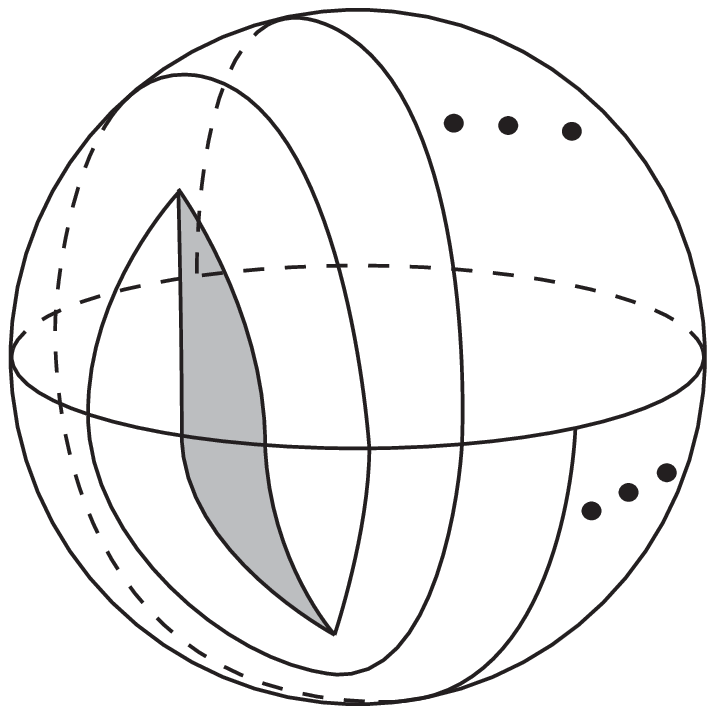}\\
\mbox{(a)} & \mbox{(b)}
\end{array}\]
\caption{The sphere $\bdy \Delta$ when $H$ has (a) offset 0 and (b) offset 1.}
\label{f:offset0and1}
\end{figure}

\begin{lem}
If $H$ is a disk in $\Delta$ with normal boundary and offset 2, and $|H \cap \TT^1|=4(n+1)$, then $H$ has topological index $n$. 
\end{lem}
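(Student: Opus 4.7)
The plan is to apply Lemma \ref{l:RemoveVertex} iteratively to collapse $\plex{\EE(H)}$ down to the boundary of an $n$-dimensional cross-polytope---a standard triangulation of $S^{n-1}$---and then conclude from $\pi_{n-1}(S^{n-1})\neq 0$ and $\pi_i(S^{n-1})=0$ for $i<n-1$ that the topological index of $H$ is exactly $n$.

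First I would set up coordinates. Place the $4(n+1)$ points of $\bdy H\cap\TT^1$ at positions $0,1,\ldots,4n+3$ on $\bdy H$, with a central point of $\MV$ at position $0$ and, by the offset-$2$ hypothesis, a central point of $\MW$ at position $2$. Then the chords of $\MV$ can be listed as $v_1,\ldots,v_{2n-1}$, where $v_i$ joins $i+1$ to $4n+3-i$, and the chords of $\MW$ as $w_1,\ldots,w_{2n-1}$, where $w_j$ joins $j+3$ to $(1-j)\bmod (4n+4)$. Two edge-compressing disks are $\TT^1$-disjoint precisely when their chords are non-crossing in the interior of $H$, which reduces to a cyclic interleaving check on endpoint pairs; and by Definition \ref{d:DiskComplex}, $\plex{\EE(H)}$ is a flag complex.

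Next I would remove every even-indexed chord in both families by iterative application of Lemma \ref{l:RemoveVertex}. The natural pairing is $v_{2k}$ with its outer odd neighbor---$v_{2k-1}$ when $2k<n$ and $v_{2k+1}$ when $2k>n$; when $n$ is even, the central $v_n$ is paired with $v_{n-1}$, and symmetrically for the $w$'s. A direct cap-containment argument shows that any chord $\TT^1$-disjoint from $v_{2k}$ through lying in one of $v_{2k}$'s caps is also $\TT^1$-disjoint from the partner. The only exceptions are the $\MW$-chords sharing a boundary endpoint of $H$ with $v_{2k}$ but not with the partner---concretely $w_{2k-2}$ (when $2k\le n$) or $w_{2k+2}$ (when $2k\ge n$)---and these must first have been removed. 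Organizing the collapses in layers, starting from the outermost indices and working inward (and handling the central pair last when $n$ is even), ensures the required obstruction has been deleted by the time each new collapse is attempted.

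After all removals, the subcomplex has $2n$ vertices $\{v_i,w_i:i\in\{1,3,\ldots,2n-1\}\}$. A case analysis of endpoint interleavings shows that for odd $i,j$ the chords $v_i$ and $w_j$ are $\TT^1$-disjoint iff $i\ne j$: when $|i-j|=2$ they share a boundary endpoint, when $|i-j|\ge 4$ both endpoints of $w_j$ lie in a single cap of $v_i$, and when $i=j$ they interleave and cross. By the flag condition, this identifies the reduced subcomplex as the boundary of the $n$-dimensional cross-polytope with antipodal pairs $(v_i,w_i)$, hence a triangulated $S^{n-1}$. The main obstacle is the sequencing of the iterative removals: the naive pairing $(v_{2k},v_{2k-1})$ fails in the full complex because $w_{2k-2}$ shares an endpoint with $v_{2k}$ but crosses $v_{2k-1}$, so one must first collapse the opposite family in the correct order. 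Once the outer-to-inner layering is set, each individual verification is a routine endpoint-arithmetic check on the cyclic parameterization.
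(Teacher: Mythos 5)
Your proposal is correct and follows essentially the same route as the paper: iteratively delete the even-indexed lines of $\MV$ and $\MW$ from the outside in via Lemma \ref{l:RemoveVertex}, then recognize the remaining complex (odd-indexed lines, with $v_i$ and $w_j$ $\TT^1$-disjoint iff $i \ne j$) as the join of $n$ $0$-spheres, i.e.\ $S^{n-1}$, giving index $n$. Your explicit identification of the obstruction $w_{i-2}$ and the resulting outer-to-inner sequencing is exactly the content of the paper's induction hypothesis, just spelled out more carefully.
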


\begin{proof}
There are $2n-1$ lines in $\MV$ and $\MW$. Number these lines as in Figure \ref{f:offset2}(a). We first show that the even numbered lines of $\MV$ and $\MW$ can be removed, and the homotopy type of the subcomplex defined by the disks represented by the remaining lines will be the same as that of the original complex $\plex{\EE(H)}$. 

First, observe that every line in both $\MV$ and $\MW$ that is $\TT^1$-disjoint from line 2 of $\MV$ is also $\TT^1$-disjoint from line 1 of $\MV$. Hence, by lemma \ref{l:RemoveVertex} we can remove line 2 of $\MV$, and the homotopy type of the complex defined by the remaining disks is unchanged. By a symmetric argument we remove line 2 of $\MW$, and line $2n-2$ of both $\MV$ and $\MW$. 

\begin{figure}
\psfrag{1}{\tiny $1$}
\psfrag{2}{\tiny $2$}
\psfrag{3}{\tiny $3$}
\psfrag{4}{\tiny $4$}
\psfrag{5}{\tiny $5$}
\psfrag{6}{\tiny $6$}
\psfrag{7}{\tiny $7$}
\psfrag{x}{(a)}
\psfrag{y}{(b)}
\[\includegraphics[width=4.5in]{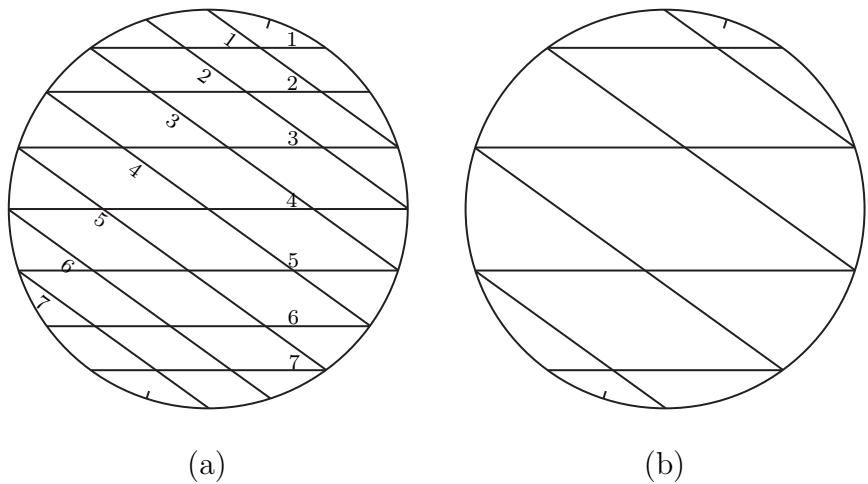}\]
\caption{(a) Numbering the line families $\MV$ and $\MW$ when the offset is 2. (b) The line families after the removal of the even numbered lines.}
\label{f:offset2}
\end{figure}

We now proceed by induction. Suppose $i$ is an even number less than $\frac{2n-1}{2}$, and we have removed all lines in $\MV$ and $\MW$ with even numbers less than $i$. Then every line of $\MV$ and $\MW$ that is $\TT^1$-disjoint from line $i$ of $\MV$ is also $\TT^1$-disjoint from line $i-1$ of $\MV$. Hence, line $i$ of $\MV$ can be removed. By a symmetric argument, we remove line $i$ of $\MW$, and line $2n-1-i$ of $\MV$ and $\MW$. 

Once the even numbered lines in both families have been removed, there are $n$ lines remaining in each family. See Figure \ref{f:offset2}(b). Furthermore, line $i$ in $\MV$ and line $j$ in $\MW$ are $\TT^1$-disjoint if and only if $i \ne j$. It follows that the complex determined by these lines is homotopy equivalent to the join of $n$ 0-spheres. This is homeomorphic to $S^{n-1}$, and thus the complex $\plex{\EE(H)}$ has index $n$.
\end{proof}

\begin{figure}
\psfrag{a}{$\alpha$}
\psfrag{b}{$\beta$}
\[\includegraphics[width=2.5 in]{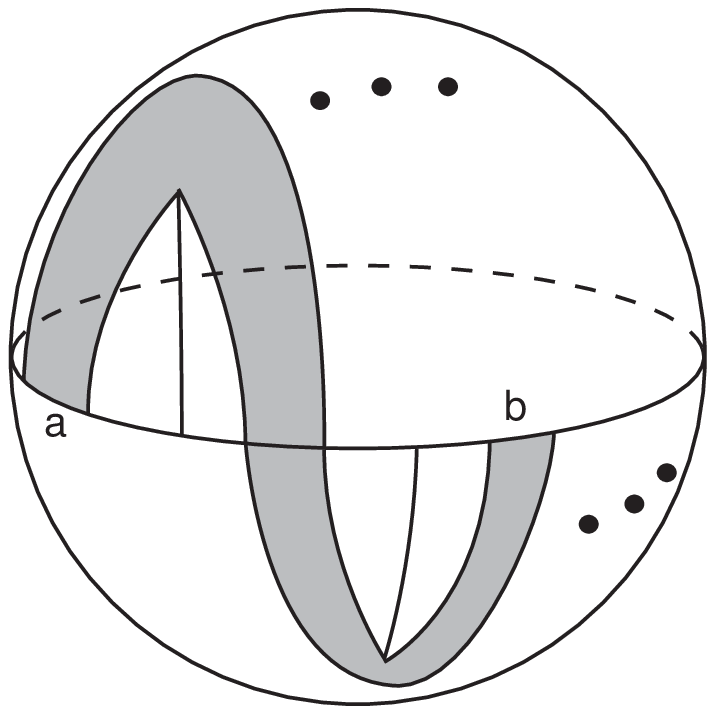}\]
\caption{The sphere $\bdy \Delta$ with offset 3.}
\label{f:offset3}
\end{figure}

\begin{lem}
There are no disks in $\Delta$ with offset 3.
\end{lem}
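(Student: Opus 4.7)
My plan is to follow the pattern of the previous lemma's proof: I will analyze the combinatorics of $\bdy \Delta$ when $H$ has offset $3$ and exhibit two disjoint arcs $\alpha, \beta$ of $\TT^1$ sharing their endpoints, so that $\TT^1$ bounds a bigon on $\bdy \Delta$, contradicting the fact that the $1$-skeleton of a tetrahedron has no multi-edges. Note that offset $3$ requires $p = 4 \le k$, so we must have $k \ge 4$ throughout, i.e.~$|\bdy H| \ge 16$.

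Label the $4k$ points of $\bdy H \cap \TT^1$ cyclically by $0, 1, \ldots, 4k-1$ so that $v_1$'s triple (in $\VV$) is $\{0,1,2\}$ and $v_3$'s triple (in $\WW$) is $\{3,4,5\}$, with the triples of $v_2, v_4$ sitting antipodally. Since each hemisphere is a disk, its $2k-3$ parallel sub-edges are pinned down, up to isotopy, as the unique nested non-crossing pairing of the non-triple positions. I would then trace each of the three edges of $\Delta$ emanating from $v_1$ as an alternating path in $\TT^1$: start at $v_1$, follow its sub-edge to a point of $c = \bdy H$, cross $c$, follow the forced sub-edge on the other side (either a parallel sub-edge or one reaching another vertex), and continue alternating until the path terminates at a vertex. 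For $\Delta$ to be realized as a tetrahedron these three paths should end at the three distinct vertices $v_2, v_3, v_4$; the crux is to show that under the offset-$3$ hypothesis the forced pairings instead send at least two of them to the same terminal vertex, producing the disjoint arcs $\alpha$ and $\beta$ of Figure \ref{f:offset3}.

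The main obstacle will be verifying that two traces always collide at a common vertex for every $k \ge 4$. A direct computation in low cases already shows that the colliding pair depends on $k$: for $k = 4$ the traces from positions $1$ and $2$ both end at $v_4$, for $k = 5$ it is those from $0$ and $1$, while for $k = 6$ all three traces end at $v_2$. I plan to handle the general case by a short induction on the number of alternations in a trace, exploiting the fact that the nested pairings in $\VV$ and $\WW$ shift trace positions by a fixed amount per step, so the three traces move inward in lockstep until one is deflected into a vertex triple --- at which point a second trace is deflected into the same triple either immediately or at the next step. In every case this produces two disjoint arcs of $\TT^1$ between $v_1$ and a common vertex of $\Delta$, yielding the desired bigon and hence the contradiction.
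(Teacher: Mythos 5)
Your route is genuinely different from the paper's, and it is viable, but as written it is a plan whose crux is asserted rather than proved. The paper's proof is a purely local, one-face argument: with offset $3$ the curve $\bdy H$ cuts off, inside a single face $\sigma$, a region bounded by two arcs of $\bdy H$ that contains two vertices of $\sigma$ (the shaded region of Figure \ref{f:offset3}); since a normal arc in a triangle separates exactly one vertex from the other two, a region containing two vertices can abut at most one normal arc, so one of the two bounding arcs is not normal. No reconstruction of $\TT^1$ is needed, and the contradiction is with normality of $\bdy H$, not with the simple-graph structure of $\TT^1$ as in the offset $0$ and $1$ lemma whose template you are following. That said, your multi-edge contradiction is also genuinely there, and your mechanism is correct: with $v_1$'s triple at $\{0,1,2\}$ the two pairings act by $x \mapsto 8-x$ and $x \mapsto 2-x \pmod{4k}$, so the three traces remain a block of three consecutive positions, sitting at $\{6j+6,6j+7,6j+8\}$ after $j$ rounds on one side and at $\{-6j,-6j+1,-6j+2\}$ on the other, until the block meets a vertex triple. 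The gap is the sentence ``a second trace is deflected into the same triple either immediately or at the next step'': this is exactly what must be proved, and it is not a formality. It does hold, by the residue analysis your examples secretly exhaust: writing $2k = 6q+r$ with $r \in \{0,2,4\}$, for $r=0$ the block coincides with $v_2$'s triple (all three edges end at $v_2$, your $k=6$), for $r=4$ it meets $v_4$'s triple in two points (your $k=5$), and for $r=2$ it meets $v_2$'s triple in one point and the surviving pair of consecutive positions lands inside $v_4$'s triple on the very next half-step (your $k=4$, and e.g.\ $k=7$). Supplying that three-case computation completes your proof; without it, the claim that two traces must collide is unjustified, since a priori the block could shed one trace at a time and deliver the three edges to three distinct vertices. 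On balance, your argument is longer and computational but shows concretely that the offset-$3$ gluing fails to assemble into $K_4$ at all; the paper's is a two-line normality observation in a single face.
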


\begin{proof}
When the offset is 3, the sphere $\bdy \Delta$ is depicted in Figure \ref{f:offset3}. The shaded region in the figure is a subset of a 2-simplex $\sigma$, bounded by the arcs $\alpha$ and $\beta$ of $\bdy H$. This region meets two vertices of $\sigma$. Since there is only one more vertex of $\sigma$, it can not be the case that both $\alpha$ and $\beta$ are normal arcs. 
\end{proof}

\begin{figure}
\psfrag{1}{\tiny $1$}
\psfrag{2}{\tiny $2$}
\psfrag{3}{\tiny $3$}
\psfrag{4}{\tiny $4$}
\psfrag{5}{\tiny $5$}
\psfrag{6}{\tiny $6$}
\psfrag{7}{\tiny $7$}
\psfrag{8}{\tiny $8$}
\psfrag{9}{\tiny $9$}
\psfrag{a}{\tiny $10$}
\psfrag{b}{\tiny $11$}
\psfrag{c}{\tiny $12$}
\psfrag{d}{\tiny $13$}
\psfrag{e}{\tiny $14$}
\psfrag{f}{\tiny $15$}
\psfrag{g}{\tiny $16$}
\psfrag{h}{\tiny $17$}
\psfrag{i}{\tiny $18$}
\psfrag{j}{\tiny $19$}
\psfrag{k}{\tiny $20$}
\psfrag{x}{(a)}
\psfrag{y}{(b)}
\psfrag{w}{$w$}
\psfrag{v}{$v$}
\[\includegraphics[width=4.5in]{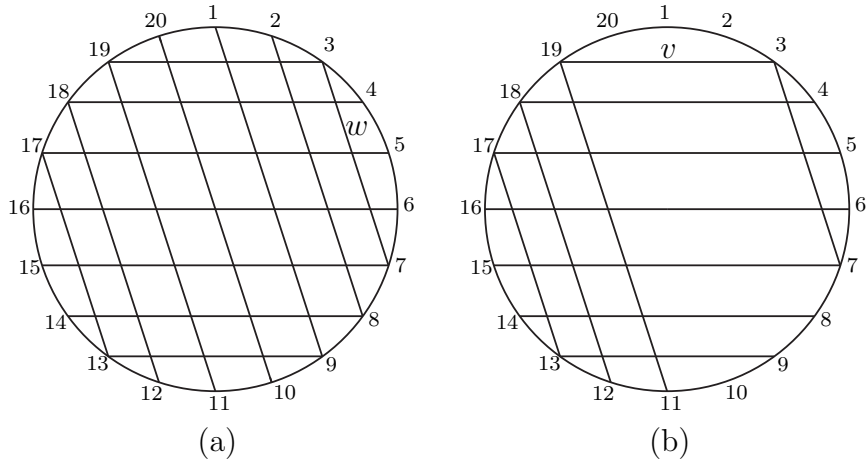}\]
\caption{(a) Any line in $\MV$ or $\MW$ that is $\TT^1$-disjoint from the lines of $\MW$ that meet the points 1, 2, or 20 is also $\TT^1$-disjoint from $w$. (b) After removal, the line $v$ is $\TT^1$-disjoint from every line that remains.}
\label{f:offset4}
\end{figure}

\begin{lem}
\label{l:offset4}
If $H$ is a disk in $\Delta$ with normal boundary and offset at least 4, then $H$ is not topologically minimal. 
\end{lem}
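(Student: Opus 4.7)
The plan is to show that when the offset is at least $4$, the complex $\plex{\EE(H)}$ is non-empty and contractible, so that $H$ fails the definition of topologically minimal. The argument is an extension of the strategy used in the offset-$2$ case: use Lemma \ref{l:RemoveVertex} to strip away vertices of $\plex{\EE(H)}$ without changing the homotopy type, until one can recognize the remainder as a cone.

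First, I would fix the picture: with boundary length $4k$ there are $2k-3$ parallel lines in each of $\MV$ and $\MW$, numbered consecutively as in Figure \ref{f:offset4}(a), and the offset $\ge 4$ determines how far $\MW$ is rotated relative to $\MV$. I would then identify an explicit line $w \in \MW$ lying near the center of the $\MW$ family, together with the few $\MW$-lines $V$ that meet the endpoint points (the points labeled $1,2,20$ in the figure). Using the combinatorics of how parallel segments in a disk intersect one another, I would verify directly that (i) each such $V$ is $\TT^1$-disjoint from $w$, and (ii) any line in $\MV \cup \MW$ which is $\TT^1$-disjoint from such a $V$ is automatically $\TT^1$-disjoint from $w$. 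Lemma \ref{l:RemoveVertex} then lets me delete these $V$'s one at a time from $\plex{\EE(H)}$ without changing the homotopy type.

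Next, I would perform a symmetric deletion on the other end of $\MW$, and the analogous deletions on the two ends of $\MV$; this is where the offset $\ge 4$ hypothesis is genuinely used, since with smaller offset the ``dominating'' lines $w$ and its analogue in $\MV$ would themselves be forced to cross the lines one is trying to remove, spoiling the dominance. After these reductions, the remaining configuration is the one shown in Figure \ref{f:offset4}(b): in particular there is a line $v \in \MV$ (the ``middle'' line on the $\MV$ side) that is $\TT^1$-disjoint from every other line still present. This is the key geometric claim, and verifying it carefully is the main obstacle: one must check both that the extreme $\MV$-lines have been removed (so $v$ no longer has parallel neighbors to cross) and that the $\MW$-lines that survive have been truncated enough by the removals that none of them overlaps $v$.

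Once such a $v$ is produced, the vertex $[v]$ is joined by an edge to every other vertex of the reduced complex, so that complex is a cone on $[v]$ and hence contractible. Since the successive applications of Lemma \ref{l:RemoveVertex} preserve homotopy type, $\plex{\EE(H)}$ itself is contractible. As $\plex{\EE(H)}$ is clearly non-empty (offset $\ge 4$ forces $|H \cap \TT^1| \ge 20$, so edge-compressing disks exist), Definition \ref{d:Indexn} is not satisfied, and $H$ is not topologically minimal.
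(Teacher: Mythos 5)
Your overall strategy is the paper's: apply Lemma \ref{l:RemoveVertex} repeatedly to delete vertices without changing the homotopy type until the remaining complex is visibly a cone, hence contractible, contradicting topological minimality. However, the concrete execution diverges from the paper in ways that matter, and as written the plan would not go through.

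First, the paper's ``dominating'' line $w$ is not near the center of the $\MW$ family: it is the line of $\MW$ through the point labeled $3$, i.e.\ near the central point $1$ of $\MV$ (this is exactly where the hypothesis offset $\ge 4$ is used --- it guarantees such a line exists). The only lines removed are the three lines of $\MW$ through the points $4k$, $1$, and $2$; each of these dominates $w$ in the sense of Lemma \ref{l:RemoveVertex} because the only chords crossing $w$ are the $\MV$-chords separated from point $1$ by $w$, and all of those also cross each of the three removed lines. No symmetric deletions at the other end of $\MW$, and no deletions from $\MV$ at all, are needed. Second, and more seriously, your cone point is misidentified: you take $v$ to be the ``middle'' line of $\MV$, but that line crosses \emph{every} line of $\MW$ (its chord separates the two central points of $\MW$), so it cannot become a cone point unless you delete all of $\MW$, which Lemma \ref{l:RemoveVertex} does not justify. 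The correct cone point is the \emph{extreme} line of $\MV$ closest to point $1$ (the chord joining points $3$ and $4k-1$): after the three $\MW$-lines above are removed, every surviving $\MW$-chord either shares an endpoint with this $v$ or lies entirely on one side of it. Your proposed extra deletions ``at the two ends of $\MV$'' would in fact risk removing this very line. So the skeleton of your argument is right, but the specific choices of which vertices to delete and which vertex cones off the remainder need to be corrected before the proof is valid.
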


\begin{proof}
Let $w$ denote the line of $\MW$ that meets point 3 (since the offset is at least 4, such a line must exist). Notice that the line of $\MW$ that meets the point 1, and the two lines of $\MW$ on either side of it, have the property that anything in $\MV$ or $\MW$ that is $\TT^1$-disjoint from them is also $\TT^1$-disjoint from $w$. See Figure \ref{f:offset4}(a). Hence, by Lemma \ref{l:RemoveVertex}, these three lines can be removed. However, once this removal takes place, the line $v$ of $\MV$ that is closest to point 1 is now $\TT^1$-disjoint from every other line in both $\MV$ and $\MW$. See Figure \ref{f:offset4}(b). We conclude that the subcomplex of  $\plex{\EE(H)}$ defined by the remaining lines is a cone on the vertex corresponding to $v$. Hence, this subcomplex is contractible, a contradiction. 
\end{proof}

Theorem \ref{t:main} immediately follows from Lemma \ref{l:NormalTriangle} and Lemmas \ref{l:helicoid} through \ref{l:offset4}.

\bibliographystyle{alpha}

\begin{thebibliography}{CM04d}

\bibitem[Bac]{TopMinNormalI}
D.~Bachman.
\newblock Normalizing {T}opologically {M}inimal {S}urfaces {I}: {G}lobal to
  {L}ocal {I}ndex.
\newblock Preprint.

\bibitem[Bac10]{TopIndexI}
D.~Bachman.
\newblock Topological {I}ndex {T}heory for surfaces in 3-manifolds.
\newblock {\em Geometry \& Topology}, 14(1):585--609, 2010.

\bibitem[CM04a]{cm1}
Tobias~H. Colding and William~P. Minicozzi, II.
\newblock The space of embedded minimal surfaces of fixed genus in a
  3-manifold. {I}. {E}stimates off the axis for disks.
\newblock {\em Ann. of Math. (2)}, 160(1):27--68, 2004.

\bibitem[CM04b]{cm2}
Tobias~H. Colding and William~P. Minicozzi, II.
\newblock The space of embedded minimal surfaces of fixed genus in a
  3-manifold. {II}. {M}ulti-valued graphs in disks.
\newblock {\em Ann. of Math. (2)}, 160(1):69--92, 2004.

\bibitem[CM04c]{cm3}
Tobias~H. Colding and William~P. Minicozzi, II.
\newblock The space of embedded minimal surfaces of fixed genus in a
  3-manifold. {III}. {P}lanar domains.
\newblock {\em Ann. of Math. (2)}, 160(2):523--572, 2004.

\bibitem[CM04d]{cm4}
Tobias~H. Colding and William~P. Minicozzi, II.
\newblock The space of embedded minimal surfaces of fixed genus in a
  3-manifold. {IV}. {L}ocally simply connected.
\newblock {\em Ann. of Math. (2)}, 160(2):573--615, 2004.

\bibitem[Hak68]{haken:68}
W.~Haken.
\newblock {\em Some Results on Surfaces in 3-Manifolds}.
\newblock M.A.A., Prentice Hall, 1968.

\bibitem[Kne29]{kneser:29}
H.~Kneser.
\newblock Geschlossene {F}l\"achen in driedimensionalen {M}annigfaltigkeiten.
\newblock {\em Jahresbericht der Dent. Math. Verein}, 28:248--260, 1929.

\bibitem[Rub95]{rubinstein:93}
J.~H. Rubinstein.
\newblock Polyhedral minimal surfaces, {H}eegaard splittings and decision
  problems for 3-dimensional manifolds.
\newblock In {\em Proceedings of the Georgia Topology Conference}, pages 1--20,
  1995.

\bibitem[Tho94]{thompson:94}
A.~Thompson.
\newblock Thin position and the recognition problem for the 3-sphere.
\newblock {\em Math. Research Letters}, 1:613--630, 1994.

\end{thebibliography}

\end{document}